\documentclass[11pt,a4paper]{article}

\usepackage{amsmath,amsfonts,amssymb,latexsym,graphics,epsfig,url}
\usepackage{xcolor}
\usepackage{amsthm}
\usepackage[english]{babel}
\usepackage{mathdots}
\usepackage{graphicx}
\usepackage{soul} %subrrayar
\usepackage[utf8]{inputenc}
\usepackage{diagbox}
\usepackage[makeroom]{cancel}
\usepackage{multicol}
\usepackage{float}              
\usepackage{algorithm}
\usepackage{algorithmic}

\newtheorem{theorem}{Theorem}[section]
\newtheorem{proposition}[theorem]{Proposition}

\newtheorem{conjecture}[theorem]{Conjecture}

\def\Z{\ns Z}

\def\vec0{\mbox{\boldmath $0$}}

\def\Z{\ns{Z}}

\def\1{\mbox{\boldmath $1$}}

\def\Z{\mathbb Z}

\begin{document}
	
\title{A note on three-quarters circulant digraphs 
\thanks{This research has been supported by
AGAUR from the Catalan Government under project 2021SGR00434 and MICINN from the Spanish Government under project PID2020-115442RB-I00.
% The research of M. A. Fiol was also supported by a grant from the  Universitat Polit\`ecnica de Catalunya with references AGRUPS-2022 and AGRUPS-2023.
}
}
	\author{C. Dalf\'o$^a$, M. A. Fiol$^b$,   M. A. Reyes$^a$\\
		\\
		{\small $^a$Dept. de Matem\`atica, Universitat de Lleida, Igualada (Barcelona), Catalonia}\\
		{\small {\tt \{monicaandrea.reyes,cristina.dalfo\}@udl.cat}}\\
		{\small $^{b}$Dept. de Matem\`atiques, Universitat Polit\`ecnica de Catalunya, Barcelona, Catalonia} \\
		{\small Barcelona Graduate School of Mathematics} \\
		{\small  Institut de Matem\`atiques de la UPC-BarcelonaTech (IMTech)}\\
		{\small {\tt miguel.angel.fiol@upc.edu} }\\
	}

\date{}
\maketitle
	
\begin{abstract}
% In this paper, we introduce a new infinite family of circulant digraphs with two steps, $a$ and $b$, such that all paths must be formed either by steps \blue{$(+a,+b)$}, \blue{$(+a,-b)$}, or \blue{$(-a,+b)$}.
% To represent such digraphs, which we call three-quarters digraphs (only three of the four directions in the plane are admissible), we use plane tessellations. 
% This allows us to obtain good results
% about their minimum diameter for every number of vertices, as well as their maximum number of vertices for every given diameter, in the context of the degree/diameter and degree/number of vertices problems.

We introduce and study a new family of circulant digraphs associated with the cyclic group $\Z_N$, obtained by restricting admissible combinations of two generators $a$ and $b$ to three coordinate sectors. The resulting distance-like function differs from the standard directed distance in circulant digraphs and gives rise to new geometric and combinatorial phenomena. Using planar lattice representations and periodic tessellations, we analyze the growth of reachable sets and derive Moore-type upper bounds for the corresponding order/diameter problem.

We construct explicit infinite families of three-quarters circulant structures with the prescribed diameter and provide lattice-based methods for determining admissible generator pairs. Separate constructions are obtained for even and odd diameters. In addition, computational experiments for small and moderate orders suggest improved families for even diameters and motivate a conjectural asymptotic formula for the maximum attainable order. The paper highlights the interplay between constrained lattice representations, periodic tilings, and extremal problems for circulant networks.
\end{abstract}

\noindent{\em Keywords:} Circulant digraphs,
Three-quarter digraphs, Diameter, Degree/diameter problem, Integer groups, Plane tessellations. \\
\noindent{\em MSC2010:} 
05C10, 05C50. 
	
%%%%%%%%%%%%%%%%%%%%%%%%%%%%%%%%%%%%%%%%%%%%%%%%%%%%%%%%%%%%%%%%%%%%%%%%%%%%%%%%%%%%%%%%%%%%%%%%%%%%%%%%%%%%%%%%%%%%%%%%%%%%%%%%%%%%

\section{Introduction}

Circulant digraphs form a central class of Cayley digraphs on cyclic groups and have been extensively studied due to their rich algebraic structure and wide applicability in areas such as communication networks, parallel architectures, and combinatorial optimization. Their vertex-transitivity and regularity make them natural candidates for addressing extremal problems, most notably the \emph{degree/diameter problem}, which seeks the largest possible number of vertices in a graph or digraph with given maximum degree and diameter (see Bermond, Delorme, and Quisquater~\cite{bdq86}, and the comprehensive survey by Miller and \v{S}ir\'a\v{n} \cite{ms05}).

A classical approach to studying distance-related properties in such graphs relies on embedding them into geometric structures. In particular, the representation of circulant digraphs by means of plane tessellations has proved to be a powerful tool to analyze distances, diameters, and shortest paths. This methodology, which can be traced back to the work of Wong and Coppersmith~\cite{wc74}, was further developed in a series of papers by Morillo, Fiol, and F\`abrega \cite{mff85}, Morillo, Comellas, and Fiol \cite{mcf87}, and Yebra, Fiol, Morillo, and Alegre~\cite{yfma85}. In these papers, optimal constructions for families of graphs associated with planar tilings were obtained. These techniques provide both an intuitive geometric interpretation and an effective combinatorial framework for obtaining extremal results.

Let $CD(N,a,b)$ denote the circulant digraph with vertex set $\mathbb{Z}_N$ and generating set $\{a,b\}$. Such digraphs are regular of degree two and vertex-transitive, and their (strongly)connectivity is determined by the condition $\gcd(N,a,b)=1$. Traditionally, distances in $CD(N,a,b)$ are defined by directed paths using only the generators $a$ and $b$. However, this standard notion does not exhaust all possible combinatorial structures that can arise from such generating sets.

In this paper, we introduce and investigate a new infinite family of circulant digraphs, which we call \emph{three-quarters digraphs}. These digraphs are defined by allowing paths that use %restricted
combinations of steps of the form $(+a,+b)$, $(+a,-b)$, and $(-a,+b)$. This modification induces a non-standard distance function that differs significantly from the classical directed distance. As a consequence, the resulting metric structure exhibits new geometric and combinatorial features that are not present in the usual circulant setting.

One of the main advantages of this model is that the vertices reachable from a fixed origin can be arranged in a planar pattern that resembles a discrete tessellation. In this representation, the number of vertices at distance exactly $\ell$ grows linearly as $3\ell+1$, leading to cumulative counts of the form $1,5,12,22,\dots$. This observation enables us to derive upper bounds on the number of vertices attainable for a given diameter $k$. However, unlike in previously studied cases, these optimal patterns do not always extend to valid digraphs, since the corresponding tiles fail to tessellate the plane periodically. This phenomenon highlights a fundamental obstruction linking combinatorial optimality with geometric feasibility.

To overcome this difficulty, we adopt a lattice-based approach inspired by earlier work on plane tessellations~\cite{mff85,mcf87}. By identifying suitable translation vectors that generate periodic tilings, we derive systems of linear equations whose solutions yield admissible pairs of generators $a$ and $b$. This approach allows us to construct infinite families of three-quarters digraphs and to distinguish between the cases of even and odd diameter. In particular, explicit formulas for the parameters are obtained by solving these systems over the integers, thereby providing constructive solutions to the degree/diameter problem within this framework.

The study of three-quarters digraphs thus combines algebraic, geometric, and combinatorial techniques. It contributes to the broader understanding of how modifications to path structures affect metric properties in Cayley digraphs and reveals new connections among planar tessellations, lattice theory, and extremal graph constructions. Moreover, these results complement previous work on circulant and related digraphs (see, for instance, 
Wong and Coppersmith~\cite{wc74}, %Fiol~\cite{f87},
Morillo, Fiol,  and  F\`abrega \cite{mff85}
and Esqué, Aguiló and Fiol \cite{eaf93}
% , and 
% Dalf\'o, Fiol, Miller, Ryan, and \v{S}ir\'a\v{n}~\cite{dfmrs19}
),
and may have further implications for the design of efficient interconnection networks and routing schemes.

The paper is organized as follows. In Section~2, we formally define three-quarters digraphs and describe their planar representation. We then analyze their distance structure and derive bounds on their diameter and order. Finally, we present constructive methods and computational results that illustrate the theoretical findings.

% Representing some graphs and digraphs using plane tessellations has proved very useful for studying their distance-related parameters. These include, for instance, the diameter, the mean distance, existence of shortest paths
% between vertices, etc.
% To our knowledge, the first reference to the use of plane tessellations in the study of certain regular digraphs of degree two appears in the work of Wong and Coppersmith \cite{wc74}.

% This paper is structured as follows. In the next section, we present... 

%%%%%%%%%%%%%%%%%%%%%%%%%%%%%%%%%%%%%%%%%%%%%%%%%%%%%%%%%%%%%%%%%%%%

\section{Three-quarters digraphs}
\label{sec:CR}

\begin{figure}[t]
    \centering
    \includegraphics[width=6cm]{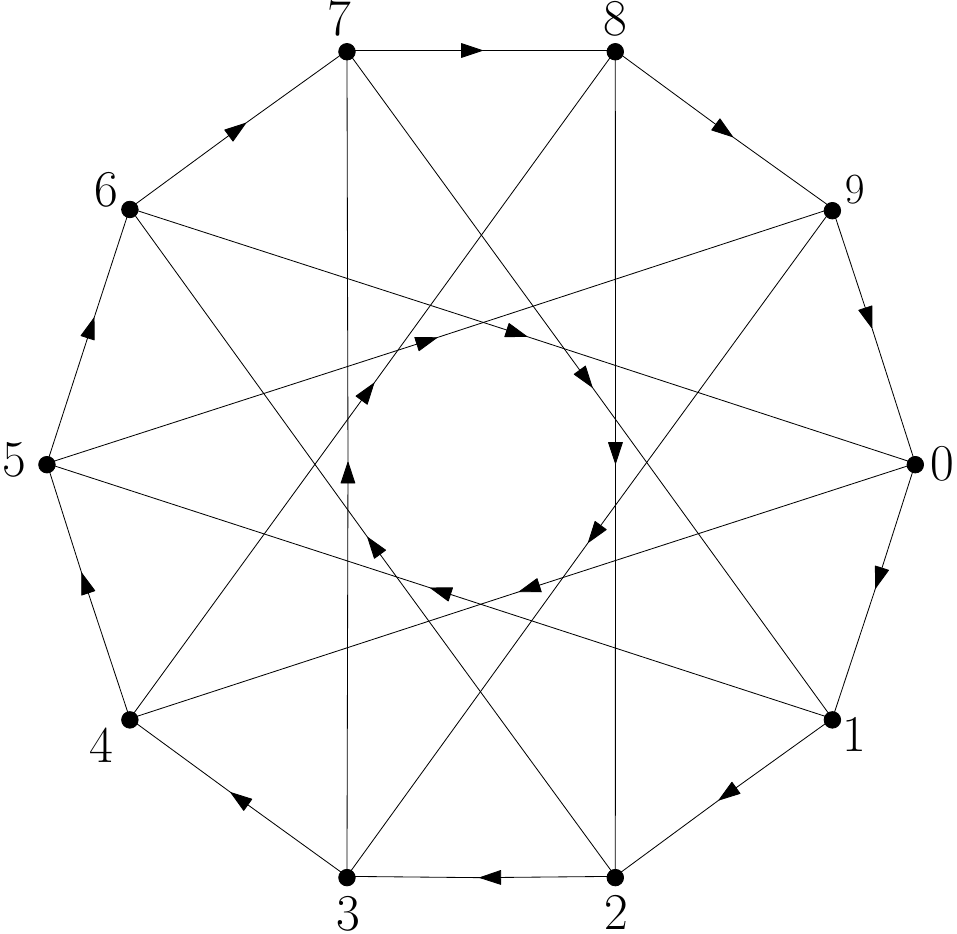}
    \caption{The circulant digraph $CD(10,1,4)\cong TQ(10,1,4)$.}
    \label{fig:circulant}
\end{figure}

A digraph is defined as an ordered pair $G=(V,A)$,
where $V$ is a non-empty set of objects called vertices, and $A$ is a set of ordered pairs of vertices called arcs (or directed edges).
A directed path in a digraph is a sequence of vertices 
 where every adjacent pair is connected by a directed edge pointing from the first vertex to the second.

The \textit{circulant digraph} $CD(N,a,b)$ is the Cayley digraph on $\Z_N$ with generating set $a,b$. Then, its $N$ vertices are identified with the integers modulo $N$, and each vertex $i$ is connected to the vertices $i+a \ (\textrm{mod }N)$ and $i+b \ (\textrm{mod }N)$ for the integers (called {\em steps)} $a$ and $b$, with $1\le a,b\le N-1$. See Figure \ref{fig:circulant} for the circulant digraph $CD(10,1,4)$.

Some simple properties of these digraphs are the following:
\begin{itemize}
\item[{\bf P1.}] 
The digraphs $CD(N,a,b)$ are regular of degree 2.
\item[{\bf P2.}]
A necessary and sufficient condition for $CD(N,a,b)$ to be strongly connected is 
$\gcd(N,a,b)=1$.
\item[{\bf P3.}] 
The digraphs $CD(N,a,b)$ are vertex-symmetric.
Thus, the study of its diameter can be done from any arbitrary vertex, say, 0.
\end{itemize}

\begin{figure}[t]
    \centering
    \includegraphics[width=8cm]{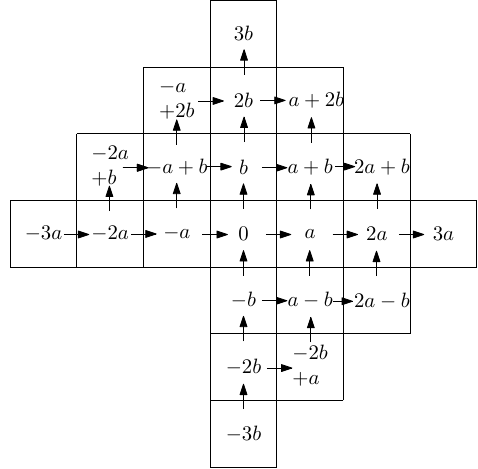}
    \caption{Adjacencies and planar pattern of the vertices in the digraph $CD(N,a,b)$ (with vertices at distance at most three from vertex $0$).}
    \label{fig:planar-pattern}
\end{figure}

Instead of considering only directed paths (with steps $a$ and $b$), as usual, in this paper, we allow all paths between two vertices that consist of steps $(a,b)$, $(a,-b)$, or $(-a,b)$. Thus, intuitively, the `distance' between two vertices $i,j$ is defined as the minimum number of these steps in such a path from $i$ to $j$, and the diameter is defined consistently. More rigorously,  
    for $x,y\in \mathbb Z_N$, we define $d(x,y):=d_{TQ}(x,y)$ as the minimum integer $\ell\geq 0$
    such that $y-x$ can be written modulo $N$ as
    \[
        y-x \equiv ma+nb,\qquad
        y-x \equiv -ma+nb,\qquad \text{or}\qquad
        y-x \equiv ma-nb,
    \]
    for some integers $m,n\geq 0$ with $m+n=\ell$.
    Equivalently, $d(x,y)$ is the minimum length of a word from $x$ to $y$
    using one of the three admissible directions determined by the pairs
    $(+a,+b)$, $(-a,+b)$, and $(+a,-b)$.
    Notice that this distance is translation invariant:
    \[
        d(x,y)=d(0,y-x).
    \]
    However, it is not necessarily symmetric, since the admissible directions do
    not include the pair $(-a,-b)$. Therefore, $d$ should be understood as a
    sector-constrained distance, rather than as a metric in the usual
    undirected sense. The diameter of $TQ(N,a,b)$ is then defined by
    \[
        k:=k(TQ(N,a,b))
        =
        \max_{x,y\in \mathbb Z_N} d(x,y)
        =
        \max_{z\in \mathbb Z_N} d(0,z).
    \]

\begin{figure}[t]
    \centering
    \includegraphics[width=6cm]{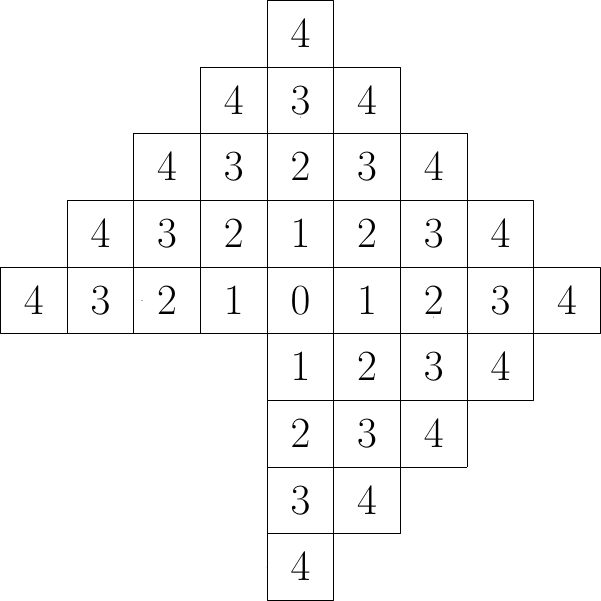}
    \caption{The distances from 0 to different vertices.}
    \label{fig:distances}
\end{figure}

These digraphs can be represented as congruent tiles that tessellate the plane periodically.
 More precisely, if each vertex is represented by a numbered modulo $N$ unit square, the vertices reached at distances $0,1,2,\ldots$ from any given vertex can be arranged in a planar pattern, as shown in Figure \ref{fig:planar-pattern} starting from vertex $0$.
% Thus, the vertices that are successively reached from vertex 0 can be arranged in a planar pattern, as shown in Figure \ref{fig:planar-pattern}.
From now on, we call these digraphs {\em three-quarters digraphs} (because only three of the four directions in the plane from a vertex 0 are admissible).
Then, Figure \ref{fig:distances} shows the distances from 0 to 4 of the different vertices and, in Figure \ref{fig:Tessellation}, there is the plane tessellation representing the three-quarter digraph $TQ(27,2,7)$.

Since there are $3\ell+1$ vertices at distance $\ell(>0)$ from vertex $0$,
given a diameter $k$, the number of vertices reached by exactly $i$ steps is $1,4,7,10,\ldots,3k+1$ (see Figure \ref{fig:distances}). Then, the maximum number of vertices  of a three-quarters digraph with diameter $k$ (ball size) would be
\begin{equation}
\label{eq:moore}
N(k)=\sum_{r=0}^k(1+3r)=
%k+1+3\sum_{r=0}^k r=
\frac{1}{2}\left(3k^2+5k+2\right)
\end{equation}
if all the numbers $ma+nb$, $-ma+nb$, and $ma-nb$ (with $m,n\ge 0$ and $m+n\le k$) were distinct modulo $N(k)$.
However, such a maximum cannot be attained for $k>1$. The reason is that, for $k>1$, the optimal tiles do not tessellate the plane.

\begin{figure}[t]
    \centering
    \includegraphics[width=6cm]{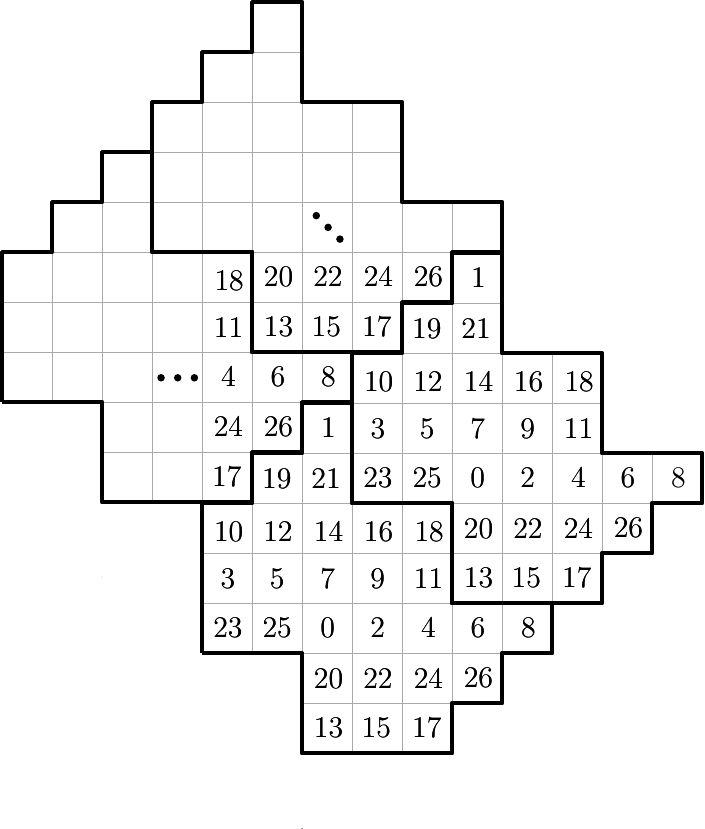}
    \vskip-0.5cm
    \caption{The plane tessellation representing the three-quarters digraph $TQ(27,2,7)$.}
    \label{fig:Tessellation}
\end{figure}

The method used in \cite{eaf93, mff85,
 yfma85} to obtain circulant digraphs with the maximum number of vertices consists of the following two steps: 
\begin{itemize}
\item
[{\bf $(1)$}] 
Using the planar pattern, find the `optimal tiles' (that is, containing the maximum number of vertices for a given diameter $k$), and check that they periodically tessellate the plane. %In our case, we have the following conjecture.
  
\item 
[{\bf $(2)$}]
From the two basic translation vectors of the periodic tiling (generating the lattice of the positions of the $0$ vertices), solve a linear system of equations to find the steps $a$ and $b$ satisfying {\bf P2.}
\end{itemize}

In other words, the lattice construction can be described as follows.  In the planar representation, moving one unit in the horizontal direction corresponds to adding $a$, while moving one unit in the vertical direction corresponds to adding $b$. Hence, a translation vector $(u,v)$ in the plane corresponds to
    the residue
    \[
        ua+vb \pmod N.
    \]
    If two copies of the tile differ by a lattice translation, then they must
    represent the same vertex labels modulo $N$. Therefore, for each translation
    vector $(u,v)$ of the lattice one must have
    \[
        ua+vb\equiv 0 \pmod N.
    \]
    If the lattice is generated by two independent translation vectors
    \[
        (u_1,v_1),\qquad (u_2,v_2),
    \]
    then the generators $a,b$ must satisfy
    \[
        u_1a+v_1b\equiv 0 \pmod N,
        \qquad
        u_2a+v_2b\equiv 0 \pmod N.
    \]
    Equivalently, there exist integers $\alpha,\beta$ such that
    \[
        \begin{pmatrix}
        u_1 & v_1\\
        u_2 & v_2
        \end{pmatrix}
        \begin{pmatrix}
        a\\ b
        \end{pmatrix}
        =
        N
        \begin{pmatrix}
        \alpha\\ \beta
        \end{pmatrix}.
    \]
    The parameters $\alpha$ and $\beta$ record how many full periods modulo $N$
    are produced by the two fundamental lattice translations. Once the tile is
    fixed, the determinant
    \[
        N=
        \det
        \begin{pmatrix}
        u_1 & v_1\\
        u_2 & v_2
        \end{pmatrix},
    \]
    which is the area of the fundamental parallelogram spanned by the two lattice vectors,
    gives the number of distinct residue classes (labels of vertices) in one fundamental tile, see Esqué, Aguiló, and Fiol \cite{eaf93}.
    This is why the order $N$ of the digraph is directly determined by the lattice.
    Solving the above linear system then yields admissible values of steps $a$ and $b$ provided that $\alpha$ and $\beta$ are chosen in such a way that $\gcd(N,a,b)=1$.
    This last condition {\bf P2} assures that all the residues modulo $N$ are obtained.
%     \blue{Thus, the fundamental result is the following lemma.}
% \blue{
% \begin{lemma} (Fundamental-domain criterion)
% Let $L\subset \Z^2$ be a lattice of determinant $N$, and let $W\subset \Z^2$ satisfy
% \begin{itemize}
% \item {\bf 1.} $|W|=N$,
% \item {\bf 2.}
% No two points of $W$ differ by a vector of $L$.
% \end{itemize}
% Then, the map
% $$
% (x,y)\mapsto xa+yb \pmod{N}
% $$
% where $\gcd(a,b,N)=1$,
% induces a bijection between $W$ and $\mathbb{Z_N}$.
% \end{lemma}
% }
Moreover, the value of the diameter $k$ is imposed by the form of the tile (for instance, $k=4$ if and only if the tile is a sub-tile of the one in Figure \ref{fig:distances} containing some squares labeled with 4). 
Given a diameter $k>1$, a good choice for the tiles that periodically tessellate the plane is the $W$-shaped tiles shaded in  Figure \ref{fig:optimal-tiles} for $k=4$ and $k=5$. (The case $k=1$ is trivial, giving the cross with arms of one unit).
In such a figure, the vertices at the farthest distance from 0 are indicated by black dots.
\begin{figure}[t]
    \centering
\includegraphics[width=10cm]{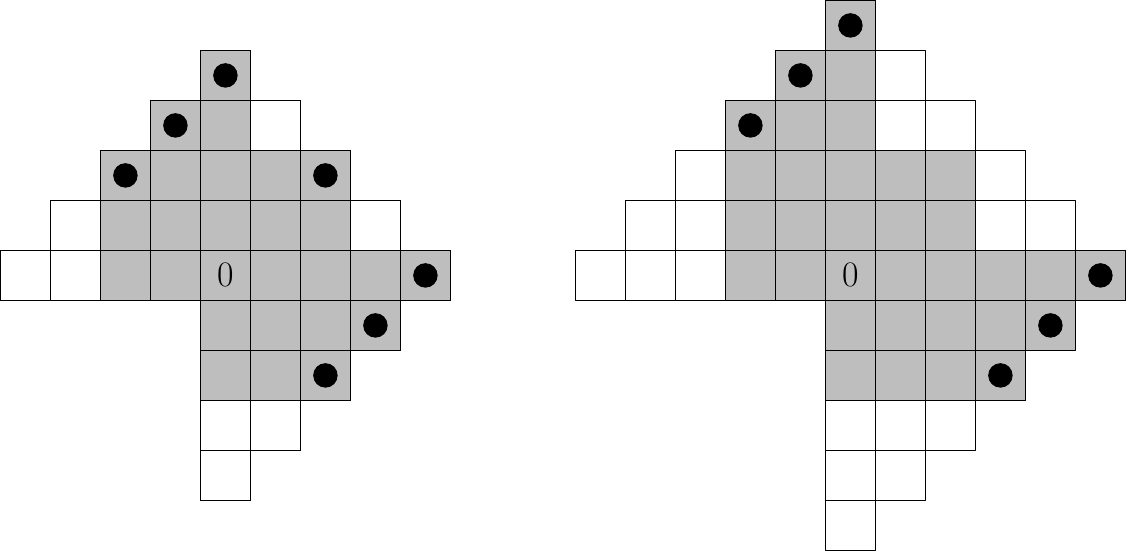}
    \caption{In gray, there are the optimal $W$-tiles with even ($k=4$) and odd diameter ($k=5$). The farthest vertices from $0$ are indicated by black dots.}
    \label{fig:optimal-tiles}
\end{figure}
In our study of such tiles, we have to distinguish between even and odd diameters $ k$.

\begin{proposition}
\label{propo:even}
There exists a three-quarter digraph $TQ(N,a,b)$ for any even diameter $k=2n$, $N=k^2+\frac{5}{2}k+1$ vertices, and steps $a=n$ and $b=3n+1$.
\end{proposition}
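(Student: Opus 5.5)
The plan is to follow the two-step lattice method described above: exhibit an explicit lattice $L\subset\Z^2$ of determinant $N$ annihilated by the map $(u,v)\mapsto ua+vb \pmod N$, and then check that the $W$-shaped tile of Figure~\ref{fig:optimal-tiles} (for $k=2n$) is a fundamental domain for $L$ lying inside the ball of radius $k$.

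First I record the arithmetic. We have $N=k^2+\frac52k+1=4n^2+5n+1=(n+1)(4n+1)$. Condition \textbf{P2} holds because $\gcd(n,3n+1)=1$, so $\gcd(N,a,b)=1$.

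For the lattice I take the two translation vectors
\[
(u_1,v_1)=(3n+1,-n),\qquad (u_2,v_2)=(n+1,n+1).
\]
Both are annihilated by the map:
\[
(3n+1)n-n(3n+1)=0,\qquad (n+1)n+(n+1)(3n+1)=(n+1)(4n+1)=N.
\]
So both vectors map to $0$ modulo $N$, i.e.\ the linear system above holds with $(\alpha,\beta)=(0,1)$. The determinant is
\[
(3n+1)(n+1)+n(n+1)=(n+1)(4n+1)=N,
\]
as required. Since $\gcd(N,a,b)=1$, the map $\Z^2\to\Z_N$ is onto and its kernel has index $N$. The kernel contains $L$, which also has index $N$, so the kernel equals $L$.

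Next I describe the $W$-tile $T$ for $k=2n$ explicitly as a union of row segments in the three admissible quadrants. Each cell $(u,v)$ of $T$ must satisfy $|u|+|v|\le k$, with $(u,v)\ne(-m,-m')$ for $m,m'>0$. Then I verify the following.
\begin{itemize}
\item[(i)] $|T|=N$, by summing the row lengths.
\item[(ii)] The translates $T+L$ cover $\Z^2$ without overlap. Concretely, translating $T$ by $\pm(u_1,v_1)$, $\pm(u_2,v_2)$ and $\pm\bigl((u_1,v_1)-(u_2,v_2)\bigr)=\pm(2n,-2n-1)$ fits the notches of the $W$ exactly against its protrusions.
\end{itemize}
Given (i) and (ii), each residue in $\Z_N$ has exactly one representative in $T$. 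Since that representative is reachable from $0$ in at most $k$ admissible steps, the diameter is at most $k$.

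For the lower bound, the counting bound \eqref{eq:moore} is not enough, since $N(k-1)=\frac{3k^2-k}{2}$ can exceed $N$. Instead I show that a black-dot cell of Figure~\ref{fig:optimal-tiles} has distance exactly $k$. A natural candidate is the cell at one of the extreme tips, say $(0,2n)$ with residue $2n(3n+1)$. Its other representatives are $(0,2n)+\lambda(3n+1,-n)+\mu(n+1,n+1)$. I need to check that none of them lies in an admissible quadrant at $\ell^1$-distance less than $k$. Only a few small $(\lambda,\mu)$ can bring the point near the origin, so this is a finite check.

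The main obstacle is step (ii), the non-overlapping tessellation. It requires pinning down the exact row-by-row shape of the $W$-tile for general $n$ and matching its boundary against the three translation vectors. I would do this by parametrizing the tile's rows (the heights $0,\dots,2n$ in the upper half and the lower arms) and checking that each translate exactly fills a complementary notch. As a sanity check I would verify the cases $n=1,2$ (for example $k=4$, $N=27$, matching Figure~\ref{fig:optimal-tiles}) directly.
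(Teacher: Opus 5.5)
Your proposal is correct and is essentially the paper's argument. Your basis spans the same lattice as the paper's $(n+1,n+1)$ and $(-2n,2n+1)$, since $(3n+1,-n)=(n+1,n+1)-(-2n,2n+1)$, and checking that $a=n$, $b=3n+1$ annihilate it (your $(\alpha,\beta)=(0,1)$) is equivalent to the paper solving the system with $\alpha=\beta=1$. The paper reads the $W$-tile tessellation and the exact diameter off Figure~\ref{fig:lattices}, so your step (ii) is left unexecuted exactly where the paper relies on that figure; your kernel-equals-$L$ argument and the lower-bound witness $(0,2n)$ add rigor rather than constituting a different route.
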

\begin{proof}
For even $k$, that is, $k=2n$, the two vectors generating the lattice are $(n+1,n+1)$ and $(-2n,2n+1)$. See Figure \ref{fig:lattices} (left). Then, the equations for the distribution of the zeros (of the lattice) are
$$
(n+1)a+(n+1)b\equiv 0 \ (\textrm{mod }N),\quad\mbox{and}\quad -2na+(2n+1)b\equiv 0 \ (\textrm{mod }N).
$$
Then, the pair of steps $a$ and $b$ satisfies the system
$$
\left(\begin{array}{cc}
n+1 & n+1 \\
-2n & 2n+1
\end{array} \right)
\left(
\begin{array}{c}
a \\
b
\end{array}
\right)=N\left(\begin{array}{c}
\alpha \\
\beta
\end{array}
\right)
$$
for some integers $\alpha,\beta$ and  $N=\det\left(\begin{array}{cc}
n+1 & n+1 \\
-2n & 2n+1
\end{array} \right)=4n^2+5n+1=k^2+\frac{5}{2}k+1$. Solving the system, we have
$$
\left(
\begin{array}{c}
a \\
b
\end{array}
\right)=
\left(\begin{array}{cc}
2n+1 & -(n+1) \\
2n & n+1
\end{array} \right)\left(\begin{array}{c}
\alpha \\
\beta
\end{array}
\right).
$$
Taking, for instance, $\alpha=\beta=1$, we get the steps $a=n$ and $b=3n+1$ satisfying Property {\bf P2}. In the case of diameter $k=2$, or $n=1$, Figure \ref{fig:circulant} shows the three-quarters digraph $TQ(10,1,4)$.
\end{proof}

\begin{figure}[t]
    \centering
    \includegraphics[width=12cm]{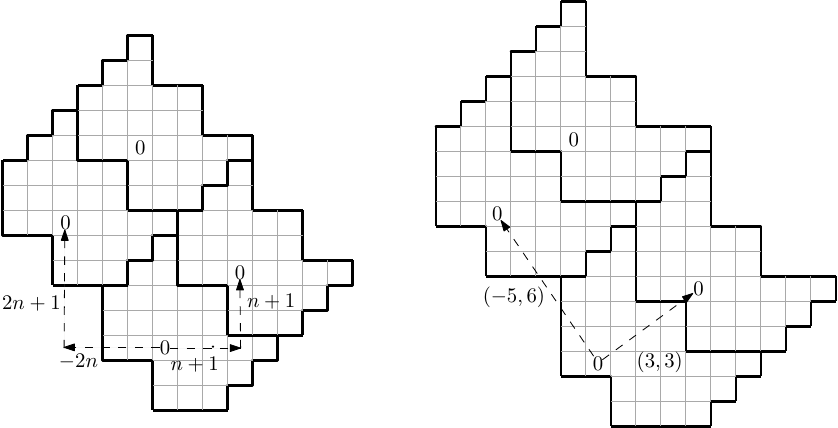}
    \caption{Plane tessellation with $W$-tiles, and vectors generating the lattices. Left: $k=2n=4$; Right: $k=2n+1=5$.}
    \label{fig:lattices}
\end{figure}

\begin{proposition}
\label{propo:odd}
There exists a three-quarter digraph $TQ(N,a,b)$ for any odd diameter $k=2n+1>1$, $N=k^2+\frac{3}{2}k+\frac{1}{2}$ vertices, and steps $a=n+1$ and $b=3n+2$.
\end{proposition}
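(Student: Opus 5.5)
The plan follows the proof of Proposition~\ref{propo:even}, using the odd $W$-tile of Figure~\ref{fig:optimal-tiles} (case $k=5$) and the lattice of Figure~\ref{fig:lattices} (right). For $k=2n+1$ I expect the lattice of zeros to be generated by
\[
(n+1,n+1) \qquad\text{and}\qquad (-(2n+1),\,2n+2).
\]
The first vector matches the even case. The second is the even-case vector $(-2n,2n+1)$ enlarged by one unit in each coordinate, to account for the extra layer of the tile.

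The first check is the order. The determinant of the lattice matrix is
\[
\det\begin{pmatrix} n+1 & n+1\\ -(2n+1) & 2n+2\end{pmatrix}
=(n+1)(2n+2)+(n+1)(2n+1)=(n+1)(4n+3)=4n^2+7n+3 .
\]
Substituting $n=(k-1)/2$ gives $k^2+\frac32k+\frac12$, as required.

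Next come the step values. The system
\[
\begin{pmatrix} n+1 & n+1\\ -(2n+1) & 2n+2\end{pmatrix}\begin{pmatrix}a\\ b\end{pmatrix}=N\begin{pmatrix}\alpha\\ \beta\end{pmatrix}
\]
is solved with the adjugate matrix, which gives
\[
\begin{pmatrix}a\\ b\end{pmatrix}=\begin{pmatrix} 2n+2 & -(n+1)\\ 2n+1 & n+1\end{pmatrix}\begin{pmatrix}\alpha\\ \beta\end{pmatrix}.
\]
Taking $\alpha=\beta=1$ yields $a=n+1$ and $b=3n+2$. This can be confirmed directly:
\[
(n+1)(a+b)=(n+1)(4n+3)=N,
\qquad
-(2n+1)(n+1)+(2n+2)(3n+2)=(n+1)(4n+3)=N .
\]
Since $\gcd(n+1,3n+2)=\gcd(n+1,-1)=1$, Property~{\bf P2} holds.

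The main obstacle is the geometric step, which the even case also leaves implicit. One must show three things about the odd $W$-tile:
\begin{itemize}
\item it consists of exactly $N$ cells;
\item every cell lies within sector-distance at most $k$ of $0$ in the pattern of Figure~\ref{fig:distances}, and some cell (a black dot) has distance exactly $k$;
\item its translates by the lattice above tile the plane, so that no two cells differ by a lattice vector.
\end{itemize}
To count cells, I would describe the tile row by row as a union of horizontal segments inside the three admissible quadrants and sum the segment lengths, aiming at $(n+1)(4n+3)$. For the tiling, I would check that the two generating vectors carry the boundary of the tile onto complementary boundary pieces. Because the tile has $\det L$ cells and the lattice has determinant $N$, this matching is enough to give a fundamental domain. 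Once that holds, the map $(x,y)\mapsto xa+yb \pmod N$ is a bijection from the tile onto $\mathbb{Z}_N$. Every residue is therefore reached within $k$ steps, and the diameter is exactly $k$.
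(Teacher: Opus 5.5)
Your proposal coincides with the paper's proof. It uses the same lattice basis $(n+1,n+1)$, $(-(2n+1),2n+2)$ taken from Figure~\ref{fig:lattices}, the same determinant $(n+1)(4n+3)$ and the same adjugate solution with $\alpha=\beta=1$ giving $a=n+1$, $b=3n+2$; your check $\gcd(n+1,3n+2)=1$ is exactly what \textbf{P2} requires.

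The geometric step you flag is also left to the figure in the paper. If you carry it out, note that a black-dot cell at planar distance $k$ certifies $d(0,z)=k$ only if none of its lattice translates lies in the three-quarter ball of radius $k-1$. Being a fundamental domain does not give this, although it does hold here: exactly $2(n+1)$ residues need $k$ steps.
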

\begin{proof}
For odd $k$, that is, $k=2n+1$, the two vectors generating the lattice are $(n+1,n+1)$ and $(-(2n+1),2n+2)$. See Figure \ref{fig:lattices} (right). Then, the pair of steps satisfies the system
$$
\left(\begin{array}{cc}
n+1 & n+1 \\
-(2n+1) & 2n+2
\end{array} \right)
\left(
\begin{array}{c}
a \\
b
\end{array}
\right)=N\left(\begin{array}{c}
\alpha \\
\beta
\end{array}
\right),
$$
where $N=\det\left(\begin{array}{cc}
n+1 & n+1 \\
-(2n+1) & 2n+2
\end{array} \right)=4n^2+7n+3=k^2+\frac{3}{2}k+\frac{1}{2}$. Solving the system, we have
$$
\left(
\begin{array}{c}
a \\
b
\end{array}
\right)=
\left(\begin{array}{cc}
2n+2 & -(n+1) \\
2n+1 & n+1
\end{array} \right)\left(\begin{array}{c}
\alpha \\
\beta
\end{array}
\right).
$$
Taking, for instance, $\alpha=\beta=1$, we get the steps $a=n+1$ and $b=3n+2$.
\end{proof}

\begin{figure}[t]
    \centering
\includegraphics[width=10cm]{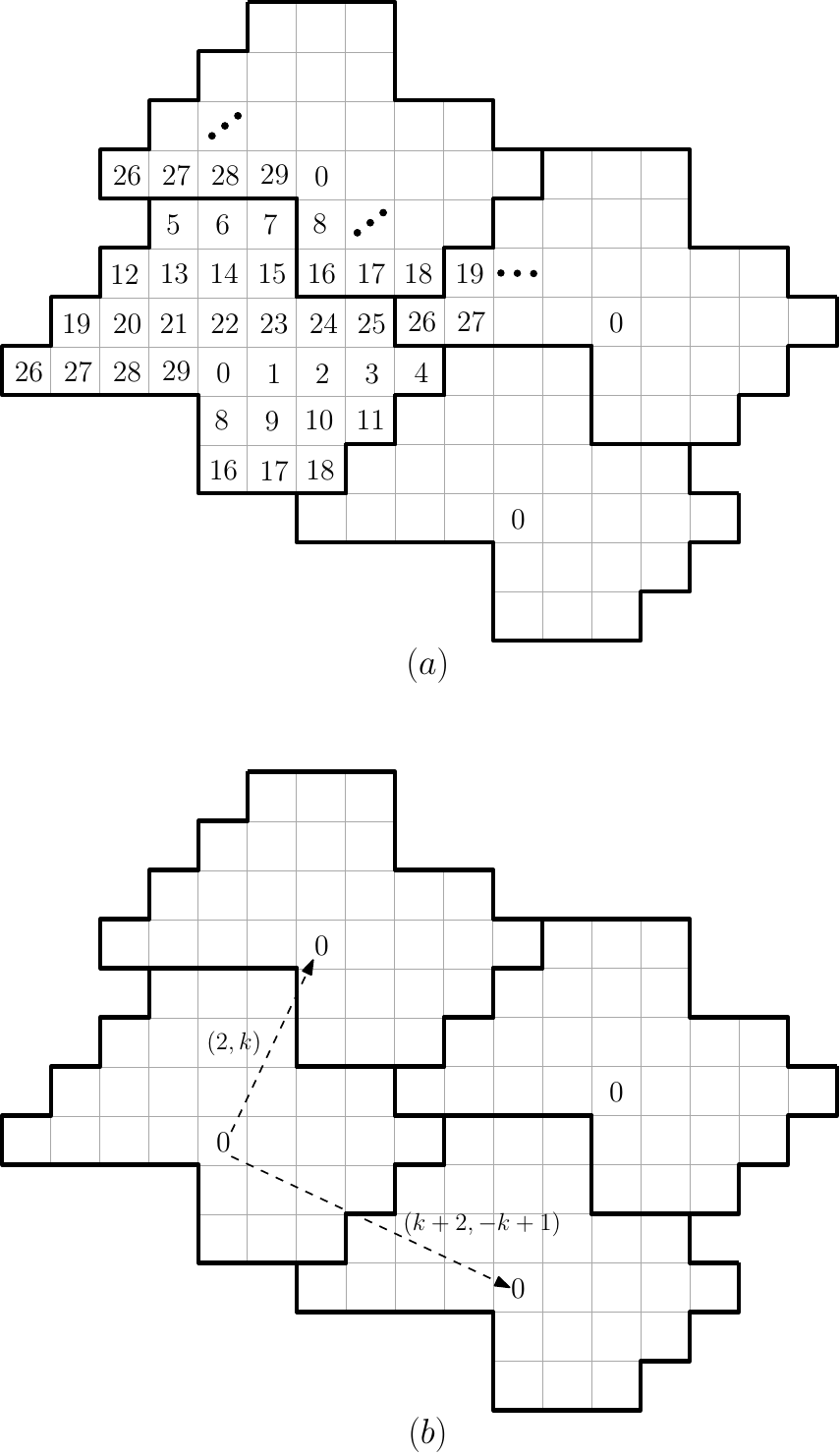}
    \caption{$(a)$ Plane tessellation for the three-quarters digraph with diameter $k=4$, order $N=30$, and  steps $a=1$ and $b=22$.
    $(b)$ Conjectured vectors generating the lattices for a general tile.}
    \label{fig:teselk=4}
\end{figure}

For the particular case $k=1$, we have the three-quarter digraph $TQ(5,1,2)$.

In Table \ref{tab:tabla1-tq}, we show the minimum diameter $k$ and steps $a,b$ for each number of vertices $N\le 208$ of three-quarters digraphs. The cases in which we get the maximum number of vertices for a given diameter are in boldface. Notice that such a maximum is greater than the values obtained with the $W$-shape tiles. For instance, for diameter $k=4$, Proposition \ref{propo:even} gives $N=27$ (see Figure \ref{fig:optimal-tiles}), whereas, according to Table \ref{tab:tabla1-tq}, we have a maximum of $N=30$. The tiles that achieve the maximum values are more involved, as shown in Figure \ref{fig:teselk=4} for $N=30$ and steps $a=1$ and $b=22=-8$.
% Looking at Table \ref{tab:tabla2-tq} for even diameter $k$, the maximum value is $N=\frac{9}{8}k^2+\frac{11}{4}k+1$, showing that the value $N=k^2+\frac{5}{2}k+1$ of Proposition \ref{propo:even} yields a good approximation.

As it was given in \eqref{eq:moore}, for diameter $k$, the number of vertices contained in the ideal ball of radius $k$ is $\frac{3}{2}k^2+\frac{5}{2}k+1$.
    This gives the natural Moore-type upper bound for the three-quarter distance
    model, provided that all represented residues are distinct modulo $N$.
    Nevertheless, for $k>1$, this ideal ball does not tessellate the plane. Hence, the upper bound is not achievable
 with a three-quarters circulant digraph.
    The constructions given in Propositions 2.1 and 2.2 should therefore be
    interpreted as explicit infinite families of large order, rather than as
    optimal constructions in general. More precisely, they give
    \[
        N(k)=k^2+\frac52k+1
        \quad\text{for even }k,
    \]
    and
    \[
        N(k)=k^2+\frac32k+\frac12
        \quad\text{for odd }k.
    \]
In contrast, according to Table \ref{tab:tabla3-tq}, the maximum order for even diameter $k\le 30$ is $N(k)=\frac{9}{8}k^2 + \frac{22}{8}k + 1$, achieved with steps $a=1$ and
$b(k)=7+23\left(\frac{k-2}{4}\right)+18\left(\frac{k-2}{4}\right)^2$ for $k=2r$ with odd $r$, and 
$b(k)=22+41\left(\frac{k-4}{4}\right)+18\left(\frac{k-4}{4}\right)^2$ for $k=2r$, with even $r$.
Experimental results suggest that these values are valid for any even diameter. The difficulty is that the associated tiles exhibit similar but apparently non-generalizable shapes. Examples include the cases $k=6,12,18$ in Figure \ref{fig:teselk=6-12-18}.
 % \blue{As commented above,} for even diameter, computational evidence suggests that larger examples may
 %    exist, with
 %    \[
 %        N(k)=\frac98k^2+\frac{22}{8}k+1.
 %    \]
    Thus, one may add the following conjecture.
    \begin{conjecture}
       For every even diameter $k$, there exists a three-quarter circulant digraph
    $TQ(N,a,b)$ with
    \[
        N(k)=\frac98k^2+\frac{22}{8}k+1,
    \]
    and steps indicated as above. More precisely: if $k=4s$ we have $N(k)=18s^2+11s+1$, $a=1$, and $b(k)=18s^2+5s-1$; and if $k=4s+2$ we have $N(k)=18s^2+29s+11$, $a=1$, and $b(k)=18s^2+23s+7$. Moreover, such numbers of vertices $N(k)$ are optimal among all three-quarter circulant digraphs of such diameters $k$. 
    \end{conjecture}

\begin{figure}[t]
    \centering
    \includegraphics[width=10cm]{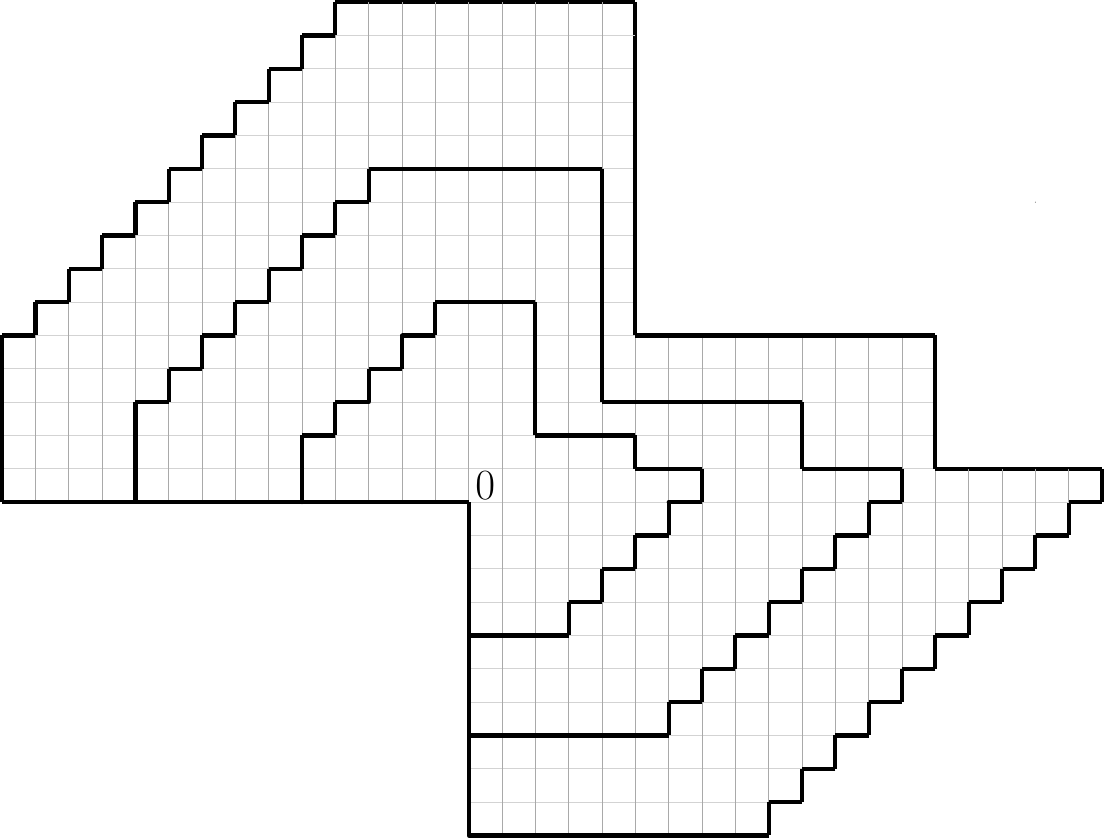}
    \caption{Similar plane tiles for the three-quarters digraphs with diameters $k=6$, $12$, and $18$, and maximum number of vertices.}
    \label{fig:teselk=6-12-18}
\end{figure}

From the case of $k=4$, we also have the following conjecture 
for general diameter, slightly improving that of Propositions \ref{propo:even} and \ref{propo:odd}.

\begin{conjecture}
\label{propo:even(b)}
Given an integer $k\ge 1$, the lattice generated by the vectors $(u_1,v_1)=(k+2,-k+1)$ and $(u_2,v_2)=(2,k)$ corresponds to a tile of a three-quarters digraph with diameter $k$.
Then, there exists a three-quarter digraph $TQ(N,a,b)$ with diameter $k$, $N=k^2+4k-2$ vertices, and steps $a=1$ and $b=-k-4$.
\end{conjecture}

Looking at the case for $k=4$ in the tables and also Figures \ref{fig:teselk=4} $(a)$ and $(b)$, the equations for the distribution of the zeros could be
$$
(k+2)a+(-k+1)b\equiv 0 \ (\textrm{mod }N),\quad\mbox{and}\quad 2a+kb\equiv 0 \ (\textrm{mod }N).
$$
Then, the pair of steps $a$ and $b$ satisfies the system
$$
\left(\begin{array}{cc}
k+2 & -k+1 \\
2 & k
\end{array} \right)
\left(
\begin{array}{c}
a \\
b
\end{array}
\right)=N\left(\begin{array}{c}
\alpha \\
\beta
\end{array}
\right)
$$
for some integers $\alpha,\beta$ and  $N=\det\left(\begin{array}{cc}
k+2 & -k+1 \\
2 & k
\end{array} \right)=k^2+4k-2$. Solving the system, we have
$$
\left(
\begin{array}{c}
a \\
b
\end{array}
\right)=
\left(\begin{array}{cc}
k & k-1 \\
-2 & k+2
\end{array} \right)\left(\begin{array}{c}
\alpha \\
\beta
\end{array}
\right).
$$
Taking, for instance, $\alpha=1$ and $\beta=-1$, we get the steps $a=1$ and $b=-k-4$ satisfying property {\bf P2}.

%%%%%%%%%%%%%%%%%%%%%%%%%%%%%%%%%%%%%%%%%%%%%%%%%%%%%
\section*{Acknowledgments}

The authors thank emergency medical doctor Susana Sim\'o for her help with the name of these new digraphs. 

%%%%%%%%%%%%%%%%%%%%%%%%%%%%%%%%%%%%%%%%%%%%%%%%%%%%%
\section*{Statements and Declarations}

The authors have no competing interests.\\
All the data generated in this paper is included here. 

%%%%%%%%%%%%%%%%%%%%%%%%%%%%%%%%%%%%%%%%%%%%%%%%%%%%%

%\section*{Appendix }
\appendix
  \begin{table}[t]
  \caption{The minimum diameter $k$ for each number of vertices $N\le 208$ with steps $a$ and $b$ in the three-quarters digraphs $TQ(N,a,b)$.}
		\label{tab:tabla1-tq}
  \small
		\begin{center}
\setlength{\tabcolsep}{3pt}
\begin{tabular}{
||c|c|c|c||
|c|c|c|c||
|c|c|c|c||
|c|c|c|c||
|c|c|c|c||
|c|c|c|c||}
\hline
$N$ & $a$ & $b$ & $k$ & $N$ & $a$ & $b$ & $k$ &
$N$ & $a$ & $b$ & $k$ &
$N$ & $a$ & $b$ & $k$ &
$N$ & $a$ & $b$ & $k$  \\
\hline
4 &   1 &   2 &   1 &	45 &   1 &  20 &   6 &	86 &   1 &  20 &   8 &	127 &   1 &  24 &  10 &	168 &   1 &  15 &  12 \\
{\bf 5} &   {\bf 1} &  {\bf 2} &  {\bf 1} &	46 &   1 &  14 &   6 &	87 &   1 &  12 &   8 &	128 &   1 &  30 &  10 &	169 &   1 &  27 &  12 \\
6 &   1 &   2 &   2 &	47 &   1 &   9 &   6 &	88 &   1 &  14 &   8 &	129 &   1 &  14 &  10 &	170 &   1 &  14 &  12 \\
7 &   1 &   2 &   2 &	48 &   1 &   9 &   6 &	89 &   1 &  41 &   8 &	130 &   1 &  60 &  10 &	171 &   1 &  54 &  12 \\
8 &   1 &   3 &   2 &	49 &   1 &  15 &   6 &	90 &   1 &  66 &   8 &	131 &   1 &  18 &  10 &	172 &   1 &  67 &  12 \\
9 &   1 &   4 &   2 &	50 &   1 &   8 &   6 &	91 &   1 &  28 &   8 &	132 &   1 &  16 &  10 &	173 &   1 &  17 &  12 \\
10 &   1 &   4 &   2 &	51 &   1 &  30 &   6 &	92 &   1 &  76 &   8 &	133 &   1 &  62 &  10 &	174 &   1 &  17 &  12 \\
{\bf 11} &   {\bf 1} &   {\bf 7} &  {\bf 2} &	52 &   1 &  12 &   6 &	93 &   1 &  13 &   9 &	134 &   1 &  12 &  11 &	175 &   1 &  19 &  12 \\
12 &   1 &   4 &   3 &	53 &   1 &  10 &   6 &	94 &   1 &  82 &   8 &	135 &   1 & 115 &  10 &	176 &   1 &  19 &  12 \\
13 &   1 &   3 &   3 &	54 &   1 &  24 &   6 &	{\bf 95} &   {\bf 1} &  {\bf 61} &   {\bf 8} &	136 &   1 &  32 &  10 &	177 &   1 &  69 &  12 \\
14 &   1 &   4 &   3 &	55 &   1 &  35 &   6 &	96 &   1 &  18 &   9 &	137 &   1 &  15 &  11 &	178 &   1 &  42 &  12 \\
15 &   1 &   6 &   3 &	56 &   1 &   9 &   7 &	97 &   1 &  30 &   9 &	138 &   1 & 124 &  10 &	179 &   1 &  16 &  12 \\
16 &   1 &  10 &   3 &	57 &   1 &  45 &   6 &	98 &   1 &  12 &   9 &	139 &   1 &  15 &  11 &	180 &   1 &  84 &  12 \\
17 &   1 &   5 &   3 &	{\bf 58} &  {\bf 1} &  {\bf 48} &  {\bf 6} &	99 &   1 &  12 &   9 &	140 &   1 & 122 &  10 &	181 &   1 &  82 &  12 \\
18 &   2 &  15 &   3 &	59 &   1 &  11 &   7 &	100 &   1 &  16 &   9 &	{\bf 141} &  {\bf 1} &  {\bf 44} & {\bf 10} &	182 &   1 &  22 &  12 \\
{\bf 19} &   {\bf 1} &   {\bf 8} &   {\bf 3} &	60 &   1 &  14 &   7 &	101 &   1 &  11 &   9 &	142 &   1 &  14 &  11 &	183 &   1 &  13 &  13 \\
20 &   1 &   6 &   4 &	61 &   1 &  14 &   7 &	102 &   1 &  14 &   9 &	143 &   1 &  14 &  11 &	184 &   1 &  18 &  12 \\
21 &   1 &   4 &   4 &	62 &   1 &  10 &   7 &	103 &   1 &  24 &   9 &	144 &   1 &  34 &  11 &	185 &   1 &  20 &  12 \\
22 &   1 &   5 &   4 &	63 &   1 &  10 &   7 &	104 &   1 &  32 &   9 &	145 &   1 &  13 &  11 &	186 &   1 & 137 &  12 \\
23 &   1 &   7 &   4 &	64 &   1 &  12 &   7 &	105 &   1 &  77 &   9 &	146 &   1 &  20 &  11 &	187 &   1 &  59 &  12 \\
24 &   1 &   7 &   4 &	65 &   1 &   9 &   7 &	106 &   1 &  20 &   9 &	147 &   1 &  16 &  11 &	188 &   1 &  30 &  13 \\
25 &   1 &  11 &   4 &	66 &   1 &  20 &   7 &	107 &   1 &  13 &   9 &	148 &   1 &  16 &  11 &	189 &   1 &  36 &  12 \\
26 &   1 &   6 &   4 &	67 &   1 &  26 &   7 &	108 &   1 &  42 &   9 &	149 &   1 &  18 &  11 &	190 &   1 & 140 &  12 \\
27 &   1 &   8 &   4 &	68 &   1 &  40 &   7 &	109 &   1 &  15 &   9 &	150 &   1 &  42 &  11 &	191 &   1 &  17 &  13 \\
28 &   1 &  12 &   4 &	69 &   1 &  11 &   7 &	110 &   1 &  96 &   9 &	151 &   1 &  24 &  11 &	192 &   1 &  42 &  13 \\
29 &   1 &  23 &   4 &	70 &   1 &  58 &   7 &	111 &   1 &  10 &  10 &	152 &   1 &  84 &  11 &	193 &   1 & 114 &  12 \\
{\bf 30} &   {\bf 1} &  {\bf 22} &   {\bf 4} &	71 &   1 &  52 &   7 &	112 &   1 &  72 &   9 &	153 &   1 &  15 &  11 &	194 &   1 &  16 &  13 \\
31 &   1 &   5 &   5 &	72 &   1 &  10 &   8 &	113 &   1 &  18 &  10 &	154 &   1 &  48 &  11 &	195 &   1 & 177 &  12 \\
32 &   1 &   6 &   5 &	73 &   1 &   8 &   8 &	114 &   1 &  14 &  10 &	155 &   1 & 128 &  11 &	{\bf 196} &   {\bf 1} & {\bf 176} & {\bf 12} \\
33 &   1 &  10 &   5 &	74 &   1 &   9 &   8 &	115 &   1 &  27 &   9 &	156 &   1 & 136 &  11 &	197 &   1 &  15 &  13 \\
34 &   1 &   8 &   5 &	{\bf 75} &  {\bf 1} &  {\bf 23} &  {\bf 7} &	116 &   1 &  14 &  10 &	157 &   1 &  17 &  11 &	198 &   1 &  24 &  13 \\
35 &   1 &   8 &   5 &	76 &   1 &  12 &   8 &	{\bf 117} &   {\bf 1} & {\bf 102} &   {\bf 9} &	158 &   1 & 140 &  11 &	199 &   1 &  24 &  13 \\
36 &   1 &  14 &   5 &	77 &   1 &  18 &   8 &	118 &   1 &  33 &  10 &	159 &   1 & 141 &  11 &	200 &   1 &  32 &  13 \\
37 &   1 &   7 &   5 &	78 &   1 &  18 &   8 &	119 &   1 &  13 &  10 &	160 &   1 &  50 &  11 &	201 &   1 &  18 &  13 \\
38 &   1 &  24 &   5 &	79 &   1 &  11 &   8 &	120 &   1 &  13 &  10 &	161 &   1 &  22 &  12 &	202 &   1 &  18 &  13 \\
39 &   1 &   9 &   5 &	80 &   1 &  11 &   8 &	121 &   1 &  23 &  10 &	162 &   2 & 117 &  11 &	203 &   1 &  22 &  13 \\
40 &   1 &  12 &   5 &	81 &   1 &  13 &   8 &	122 &   1 &  12 &  10 &	163 &   1 &  31 &  11 &	204 &   1 &  20 &  13 \\
41 &   1 &  26 &   5 &	82 &   1 &  10 &   8 &	123 &   1 &  15 &  10 &	164 &   1 &  16 &  12 &	205 &   1 &  20 &  13 \\
42 &   1 &   8 &   6 &	83 &   1 &  47 &   8 &	124 &   1 &  15 &  10 &	165 &   1 &  18 &  12 &	206 &   1 &  49 &  13 \\
{\bf 43} &   {\bf 1} &  {\bf 19} &  {\bf 5} &	84 &   1 &  38 &   8 &	125 &   1 &  35 &  10 &	166 &   1 &  18 &  12 &	207 &   1 &  17 &  13 \\
44 &   1 &   7 &   6 &	85 &   1 &  16 &   8 &	126 &   1 &  20 &  10 &	{\bf 167} &   {\bf 1} & {\bf 108} &  {\bf 11} &	208 &   1 &  88 &  13 \\ 
\hline
\end{tabular}
		\end{center}
		\end{table}

%  \begin{table}[t]
%   \caption{The maximum number of vertices $N(k)$ with steps $a$ and $b$ for diameter $k$ in the three-quarters digraph $TQ(N,a,b)$.}
% \label{tab2:tabla-tq}
% \small
% 		\begin{center}
% \setlength{\tabcolsep}{3pt}
% \begin{tabular}{
% ||c|c|c|c||}
% \hline
% $N$ & $a$ & $b$ & $k$  \\
% \hline
% 5  &    1  &   2  &   1  \\
%  11  &    1  &    7  &   2  \\
%  19  &    1  &    8  &    3  \\
%  30  &    1  &   22  &    4  \\
%  43  &    1  &   19  &   5  \\
%  58  &   1  &   48  &   6  \\
%  75  &   1  &   23  &   7  \\
%  95  &    1  &   61  &    8\\
%  117  &    1  &  102  &    9  \\
%  141  &   1  &   44  &  10\\
%  167  &    1  &  108  &   11  \\
%  196  &    1  &  176  &  12  \\
% \hline
% \end{tabular}
% 		\end{center}
% 		\end{table}

 \begin{table}[t]
  \caption{ Values of $N, a, b,$ and $k$ with different steps $a$ and $b$ for $N=\frac{9k^2}{8} + \frac{22k}{8}+1$ and even $k$ in the three-quarters digraphs $TQ(N,a,b)$.}
		\label{tab:tabla2-tq}
  \small
		\begin{center}
\setlength{\tabcolsep}{3pt}
\begin{tabular}{
||c|c|c|c||
|c|c|c|c||
|c|c|c|c||
|c|c|c|c||
|c|c|c|c||
|c|c|c|c||}
\hline
$N$ & $a$ & $b$ & $k$ & $N$ & $a$ & $b$ & $k$ &
$N$ & $a$ & $b$ & $k$ &
$N$ & $a$ & $b$ & $k$ &
$N$ & $a$ & $b$ & $k$  \\
\hline
11 & 1 & 7 & 2 &	58 & 35 & 56 & 6 &	95 & 27 & 32 & 8 &	141 & 4 & 77 & 10 &	141 & 41 & 49 & 10 \\
11 & 1 & 8 & 2 &	58 & 36 & 37 & 6 &	95 & 28 & 83 & 8 &	141 & 5 & 61 & 10 &	141 & 41 & 112 & 10 \\
11 & 2 & 3 & 2 &	58 & 41 & 54 & 6 &	95 & 28 & 93 & 8 &	141 & 5 & 79 & 10 &	141 & 43 & 59 & 10 \\
11 & 2 & 5 & 2 &	58 & 47 & 52 & 6 &	95 & 29 & 59 & 8 &	141 & 7 & 26 & 10 &	141 & 44 & 103 & 10 \\
11 & 3 & 10 & 2 &	58 & 50 & 53 & 6 &	95 & 29 & 69 & 8 &	141 & 7 & 29 & 10 &	141 & 46 & 50 & 10 \\
11 & 4 & 6 & 2 &	95 & 1 & 61 & 8 &	95 & 31 & 41 & 8 &	141 & 8 & 13 & 10 &	141 & 46 & 110 & 10 \\
11 & 4 & 10 & 2 &	95 & 1 & 81 & 8 &	95 & 31 & 86 & 8 &	141 & 8 & 70 & 10 &	141 & 49 & 62 & 10 \\
11 & 5 & 7 & 2 &	95 & 2 & 27 & 8 &	95 & 32 & 52 & 8 &	141 & 10 & 17 & 10 &	141 & 50 & 85 & 10 \\
11 & 6 & 9 & 2 &	95 & 2 & 67 & 8 &	95 & 34 & 79 & 8 &	141 & 10 & 122 & 10 &	141 & 53 & 76 & 10 \\
11 & 8 & 9 & 2 &	95 & 3 & 53 & 8 &	95 & 34 & 94 & 8 &	141 & 11 & 61 & 10 &	141 & 53 & 139 & 10 \\
30 & 1 & 22 & 4 &	95 & 3 & 88 & 8 &	95 & 36 & 66 & 8 &	141 & 11 & 106 & 10 &	141 & 55 & 107 & 10 \\
30 & 2 & 11 & 4 &	95 & 4 & 39 & 8 &	95 & 37 & 52 & 8 &	141 & 13 & 74 & 10 &	141 & 56 & 67 & 10 \\
30 & 4 & 7 & 4 &	95 & 4 & 54 & 8 &	95 & 37 & 72 & 8 &	141 & 14 & 52 & 10 &	141 & 56 & 91 & 10 \\
30 & 8 & 29 & 4 &	95 & 6 & 11 & 8 &	95 & 41 & 91 & 8 &	141 & 14 & 58 & 10 &	141 & 58 & 59 & 10 \\
30 & 13 & 16 & 4 &	95 & 6 & 81 & 8 &	95 & 42 & 77 & 8 &	141 & 16 & 26 & 10 &	141 & 62 & 136 & 10 \\
30 & 14 & 17 & 4 &	95 & 7 & 47 & 8 &	95 & 42 & 92 & 8 &	141 & 16 & 140 & 10 &	141 & 64 & 104 & 10 \\
30 & 19 & 28 & 4 &	95 & 7 & 92 & 8 &	95 & 43 & 58 & 8 &	141 & 17 & 43 & 10 &	141 & 64 & 137 & 10 \\
30 & 23 & 26 & 4 &	95 & 8 & 13 & 8 &	95 & 43 & 63 & 8 &	141 & 19 & 119 & 10 &	141 & 65 & 88 & 10 \\
58 & 1 & 48 & 6 &	95 & 8 & 78 & 8 &	95 & 44 & 49 & 8 &	141 & 19 & 131 & 10 &	141 & 67 & 128 & 10 \\
58 & 2 & 23 & 6 &	95 & 9 & 64 & 8 &	95 & 46 & 51 & 8 &	141 & 20 & 34 & 10 &	141 & 70 & 119 & 10 \\
58 & 3 & 28 & 6 &	95 & 9 & 74 & 8 &	95 & 48 & 78 & 8 &	141 & 20 & 103 & 10 &	141 & 71 & 133 & 10 \\
58 & 4 & 17 & 6 &	95 & 11 & 36 & 8 &	95 & 48 & 88 & 8 &	141 & 22 & 71 & 10 &	141 & 73 & 101 & 10 \\
58 & 5 & 8 & 6 &	95 & 12 & 22 & 8 &	95 & 49 & 74 & 8 &	141 & 22 & 122 & 10 &	141 & 73 & 110 & 10 \\
58 & 6 & 11 & 6 &	95 & 12 & 67 & 8 &	95 & 51 & 71 & 8 &	141 & 23 & 25 & 10 &	141 & 74 & 85 & 10 \\
58 & 7 & 46 & 6 &	95 & 13 & 33 & 8 &	95 & 54 & 64 & 8 &	141 & 23 & 55 & 10 &	141 & 76 & 101 & 10 \\
58 & 9 & 26 & 6 &	95 & 14 & 89 & 8 &	95 & 56 & 71 & 8 &	141 & 25 & 113 & 10 &	141 & 79 & 92 & 10 \\
58 & 10 & 57 & 6 &	95 & 14 & 94 & 8 &	95 & 56 & 91 & 8 &	141 & 28 & 104 & 10 &	141 & 80 & 130 & 10 \\
58 & 12 & 51 & 6 &	95 & 16 & 26 & 8 &	95 & 59 & 84 & 8 &	141 & 28 & 116 & 10 &	141 & 80 & 136 & 10 \\
58 & 13 & 44 & 6 &	95 & 16 & 61 & 8 &	95 & 62 & 77 & 8 &	141 & 29 & 100 & 10 &	141 & 82 & 83 & 10 \\
58 & 14 & 45 & 6 &	95 & 17 & 47 & 8 &	95 & 62 & 82 & 8 &	141 & 31 & 68 & 10 &	141 & 82 & 98 & 10 \\
58 & 15 & 24 & 6 &	95 & 17 & 87 & 8 &	95 & 63 & 68 & 8 &	141 & 31 & 95 & 10 &	141 & 83 & 127 & 10 \\
58 & 16 & 39 & 6 &	95 & 18 & 33 & 8 &	95 & 68 & 93 & 8 &	141 & 32 & 52 & 10 &	141 & 86 & 118 & 10 \\
58 & 18 & 33 & 6 &	95 & 18 & 53 & 8 &	95 & 69 & 79 & 8 &	141 & 32 & 139 & 10 &	141 & 89 & 109 & 10 \\
58 & 19 & 42 & 6 &	95 & 21 & 46 & 8 &	95 & 73 & 83 & 8 &	141 & 34 & 86 & 10 &	141 & 89 & 127 & 10 \\
58 & 20 & 27 & 6 &	95 & 21 & 86 & 8 &	95 & 82 & 87 & 8 &	141 & 35 & 130 & 10 &	141 & 91 & 95 & 10 \\
58 & 21 & 22 & 6 &	95 & 22 & 72 & 8 &	95 & 84 & 89 & 8 &	141 & 37 & 77 & 10 &	141 & 92 & 100 & 10 \\
58 & 25 & 40 & 6 &	95 & 23 & 58 & 8 &	141 & 1 & 44 & 10 &	141 & 37 & 113 & 10 &	141 & 97 & 140 & 10 \\
58 & 30 & 55 & 6 &	95 & 23 & 73 & 8 &	141 & 1 & 125 & 10 &	141 & 38 & 97 & 10 &	141 & 98 & 124 & 10 \\
58 & 31 & 38 & 6 &	95 & 24 & 39 & 8 &	141 & 2 & 88 & 10 &	141 & 38 & 121 & 10 &	141 & 106 & 137 & 10 \\
58 & 32 & 49 & 6 &	95 & 24 & 44 & 8 &	141 & 2 & 109 & 10 &	141 & 40 & 65 & 10 &	141 & 107 & 121 & 10 \\
58 & 34 & 43 & 6 &	95 & 26 & 66 & 8 &	141 & 4 & 35 & 10 &	141 & 40 & 68 & 10 &	141 & 112 & 134 & 10 \\
\hline
\end{tabular}		\end{center}
		\end{table}

 \begin{table}[t]
\caption{ Values of $N$, $a$, $b$, and even $k$ with $N(k)=\frac{9k^2}{8} + \frac{22k}{8} + 1$, $a=1$, and \\
$b(k)=7+23\left(\frac{k-2}{4}\right)+18\left(\frac{k-2}{4}\right)^2$ for $k=2r$ with odd $r$, and \\
$b(k)=22+41\left(\frac{k-4}{4}\right)+18\left(\frac{k-4}{4}\right)^2$ for $k=2r$ with even $r$, \\
in the three-quarters digraphs $TQ(N,a,b)$.}
		\label{tab:tabla3-tq}
  \small
		\begin{center}
\setlength{\tabcolsep}{3pt}
\begin{tabular}{
||c|c|c|c||}
\hline
$N$ & $a$ & $b$ & $k$  \\
\hline

% 11 & 1 & 7 & 2 \\
% 11 & 1 & 8 & 2 \\
% 30 & 1 & 22 & 4 \\
% 58 & 1 & 48 & 6 \\
% 95 & 1 & 61 & 8 \\
% 95 & 1 & 81 & 8 \\
% 141 & 1 & 44 & 10 \\
% 141 & 1 & 125 & 10 \\
% 196 & 1 & 176 & 12 \\
% 260 & 1 & 238 & 14 \\
% 333 & 1 & 64 & 16 \\
% 333 & 1 & 307 & 16 \\
% 415 & 1 & 163 & 18 \\
% 415 & 1 & 387 & 18 \\
% 506 & 1 & 474 & 20 \\

11 & 1 & 7 & 2 \\
%11 & 1 & 8 & 2 \\
30 & 1 & 22 & 4 \\
58 & 1 & 48 & 6 \\
%95 & 1 & 61 & 8 \\
95 & 1 & 81 & 8 \\
%141 & 1 & 44 & 10 \\
141 & 1 & 125 & 10 \\
196 & 1 & 176 & 12 \\
260 & 1 & 238 & 14 \\
%333 & 1 & 64 & 16 \\
333 & 1 & 307 & 16 \\
%415 & 1 & 163 & 18 \\
415 & 1 & 387 & 18 \\
506 & 1 & 474 & 20 \\
606 & 1 & 572 & 22 \\
715 & 1 & 677 & 24 \\
833 & 1 & 793 & 26 \\
960 & 1 & 916 & 28 \\
1096 & 1 & 1050 & 30 \\
\hline
\end{tabular}
	
        \end{center}
		\end{table}

\appendix

\section{Computational procedure}

The values in Tables 1-3 were obtained by exhaustive computation over all admissible triples $(N,a,b)$ satisfying $$\gcd(N,a,b)=1.$$
For each triple, the diameter $k(TQ(N,a,b))$ was computed iteratively from the vertex $0$, considering only the admissible directions
$$(+a,+b), \qquad (-a,+b), \qquad (+a,-b).$$
Since the direction $(-a,-b)$ is not allowed, representations containing simultaneous negative uses of both steps were excluded.\\
The computational procedure is summarized below.
\begin{algorithm}[H]
\caption{Computation of $k(TQ(N,a,b))$}
\begin{algorithmic}[1]
\STATE \textbf{Input:} Integers $N,a,b$
\STATE \textbf{Output:} The diameter $k(TQ(N,a,b))$
\STATE $L_\ell$: vertices reached for the first time at level $\ell$
\STATE $S$: set of all reached vertices
\STATE Initialize
\[
L_0=\{0\}, \qquad S=\{0\}, \qquad \ell=0
\]
\REPEAT
    \STATE Set $L_{\ell+1}=\emptyset$
    \FOR{each $x\in L_\ell$}
        \STATE Generate
        \[
        x+a,\quad x-a,\quad x+b,\quad x-b
        \pmod N
        \]
        \STATE Discard paths using simultaneously $-a$ and $-b$
        \STATE Add to $L_{\ell+1}$ all vertices not contained in $S$
    \ENDFOR
    \STATE Update
    \[
    S\gets S\cup L_{\ell+1}
    \]
    \STATE Increase
    \[
    \ell\gets \ell+1
    \]
\UNTIL{$L_\ell=\emptyset$}
\STATE \textbf{return}
\[
k(TQ(N,a,b))=\ell-1
\]
\end{algorithmic}
\end{algorithm}
\begin{algorithm}[H]
\caption{Search for minimum diameter}
\begin{algorithmic}[1]
\STATE \textbf{Input:} Integer $N$
\STATE \textbf{Output:} Triples $(N,a,b)$ attaining minimum diameter
\STATE Initialize
\[
k_{\min}=\infty
\]
\FOR{all pairs $(a,b)$ satisfying $1\le a<b\le N-1$ and $\gcd(N,a,b)=1$}
    \STATE Compute $k(TQ(N,a,b))$ using Algorithm~A1
    \IF{$k(TQ(N,a,b))<k_{\min}$}
        \STATE Update $k_{\min}$
        \STATE Record $(N,a,b)$
    \ENDIF
\ENDFOR
\end{algorithmic}
\end{algorithm}
Some comments:
\begin{itemize}
    \item Table~1 lists all triples $(N,a,b)$ with minimum diameter for $N\le208$.
    \item Table~2 contains the values corresponding to the family
    \[
    N=\frac{9k^2}{8}+\frac{22k}{8}+1,
    \]
    for even values of $k$.
    \item Table~3 contains the computations obtained from the explicit formulas for $b(k)$.
\end{itemize}


\begin{thebibliography}{10}
\label{bibliography}

% \bibitem{al81}
% B. W. Arden and H. Lee, 
% Analysis of chordal ring networks, 
% \textit{IEEE Trans. Comput.} \textbf{C-30} (1981) 291--295.

\bibitem{bdq86}
J.-C. Bermond, C. Delorme, and J. J. Quisquater, 
Strategies for interconnection networks: some methods from graph theory, {\em J. Parallel Distrib. Comput.} {\bf 3} (1986) 433--449.

% \bibitem{cf87}
% F. Comellas and M. A. Fiol, 
% Chordal rings as optimal interconnection networks,
% \emph{Proc. Int. Conf. Parallel Process.} (1987) 684--687. 

% \bibitem{c50}
% H. S. M. Coxeter, 
% Self dual configurations and regular graphs, 
% \textit{Bull. Amer. Math. Soc.} \textbf{56} (1950) 413--455.

% \bibitem{dfmr17}
% C. Dalf\'o, M. A. Fiol, M. Miller, and J. Ryan,
% On quotient digraphs and voltage digraphs, 
% {\em Australasian J. Combin.} {\bf 69} (2017), no. 3, 368--374.

% \bibitem{dfmrs19}
% \blue{C. Dalf\'o, M. A. Fiol, M. Miller, J. Ryan, and J. \v{S}ir\'a\v{n},
% An algebraic approach to lifts of digraphs,
% {\em Discrete Appl. Math.} {\bf 269} (2019) 68--76.}

\bibitem{d74}
N. Deo, 
{\em Graph Theory with Applications to Engineering and Computer Science}, Englewood Cliffs, NJ, Prentice-Hall, 1974.

\bibitem{eaf93}
P. Esqué, F. Aguiló, and M. A. Fiol,
Double commutative-step digraphs
with minimum diameters, 
{\em Discrete Math.} {\bf 114} (1993) 147--157.

% \bibitem{f87}
% M. A. Fiol, 
% Congruences in $Z^n$, finite Abelian groups and the Chinese remainder theorem, 
% {\em Discrete Math.} {\bf 67} (1987), no. 1, 101--105.

\bibitem{ms05}
M. Miller and J. \v{S}ir\'a\v{n},
Moore graphs and beyond: A survey of the degree/diameter problem,
\textit{Electron. J. Combin.} \textbf{\#DS14} (2005) 1--61.

\bibitem{mcf87}
P. Morillo, F. Comellas, and M. A. Fiol,
The optimization of chordal ring networks,
\textit{Communication Technology}, Eds. Q. Yasheng and W. Xiuying, World Scientific, pp. 295--299, 1987.

\bibitem{mff85}
P. Morillo, M. A. Fiol,  and J. F\`abrega, 
The diameter of directed graphs associated to plane tessellations, 
\textit{Ars Combin.} \textbf{20A} (1985) 17--27.

\bibitem{r04}
D. Reid,
Teaching mathematics through brick patterns,
{\em Nexus Network J.} {\bf 6} (2004), no. 2, 113--123.

\bibitem{wc74}
C. K. Wong and D. Coppersmith, A combinatorial problem related to multimodule memory organizations, {\em J. Assoc. Comp. Mach.} {\bf 21} (1974) 392--402.

\bibitem{yfma85}
J. L. A. Yebra, M. A. Fiol, P. Morillo, and I. Alegre, 
The diameter of undirected graphs associated to plane tessellations, 
\textit{Ars Combin.} \textbf{20B} (1985) 159–171.

% \bibitem{ze92}
% G. W. Zimmerman and A.-H. Esfahanian,
% Chordal rings as fault-tolerant loops, 
% {\em Discrete Appl. Math.} {\bf 37/38} (1992) 563--573.

\end{thebibliography}
\end{document}